\newtheorem{theorem}{Theorem}[section]
\newtheorem{lemma}[theorem]{\bf Lemma}
\newtheorem{question}[theorem]{\bf Question}
\newtheorem{remark}[theorem]{\bf Remark}
\newtheorem{definition}[theorem]{\bf Definition}
\newtheorem{example}[theorem]{\bf Example} 
\newtheorem{problem}[theorem]{\bf Problem} 
\newcommand\numberthis{\addtocounter{equation}{1}\tag{\theequation}}
\def \ds{\displaystyle}
\keywords{Joint Complete Monotonocity, Moment Problem, Weighted Shift}
\subjclass[2020] {44A60}
\begin{document}
\title{Joint Complete Monotonicity of reciprocal of a polynomial in two variables }

\author{Mandar N.  Khasnis\textsuperscript{1}  \and Vinayak M. Sholapurkar\textsuperscript{2,*}}

\address[1]{\textsuperscript{1}Department of Mathematics, Smt. CHM College, Ulhasnagar, Maharashtra, India, 421 003\\Email: mkhasnis.official@gmail.com\\}

\address[2]{\textsuperscript{2}Bhaskaracharya Pratishthana, Pune, Maharashtra, India, 411004\\
	\textsuperscript{*}Corresponding author.\\
	Email: vmshola@gmail.com}


\begin{abstract}
	In this article, we study some special cases of the problem of classifying polynomials $p:\mathbb{R}^2_+\to (0,\infty)$ for which the net $\{\frac{1}{p(m,n)}\}_{m,n\in \mathbb{Z}_+}$ is a  completely monotone net, where $p(x,y)=b(x)+a(x)y$, $a(x)$ and $b(x)$ are polynomials with $\deg a < \deg b$. We also give examples of  $a(x)$ and $b(x)$ such that the net $\{\frac{1}{p(m,n)}\}_{m,n\in \mathbb{Z}_+}$ is not completely monotone. Furthermore, we also study 
    some properties of the associated subnormal weighted $2$-shifts. 
\end{abstract}
\maketitle	
\section{Introduction}
The multivariable Hausdorff moment problem  is about the characterization of those nets $\{\beta(n)\}_{n\in \mathbb{Z}_+^m}$ for which we have  $$ \beta(n)=\int_{[0,1]^m} x^n d\mu(x),\ \forall n\in \mathbb{Z}_+^m$$ for some finite positive regular Borel measure $\mu$ on $[0,1]^m.$ Such a net is often called as the  {\it Hausdorff moment net}. The solution of the said problem states that a net $\{\beta(n)\}_{n\in \mathbb{Z}_+^m}$ is an Hausdorff moment net if and only if it is {\it joint completely monotone} \cite[Proposition 4.6.11]{bcr1984}. In the case of a single variable, it is well known that if $p(x)=ax+b$ is a positive polynomial, then the sequence $\Big\{\displaystyle \frac{1}{p(n)}\Big\}_{n\in\mathbb{Z}_+}$  is completely monotone \cite{athavale1996}. Further, it follows from \cite[Proposition 6]{AS} that if $p(x,y)$ is a positive polynomial in two variables with degree at most one, then  $\Big\{\displaystyle \frac{1}{p(m,n)}\Big\}_{m,n\in \mathbb{Z}_+}$ is a joint completely monotone net. \\ 
In the light of the above discussion, we naturally have the following problem. 
	    \begin{problem}\label{p1} Classify polynomials $p:\mathbb{R}^2_+\to (0,\infty)$ for which the net $$\Big\{\frac{1}{p(m,n)}\Big\}_{m,n\in \mathbb{Z}_+}$$ is joint completely monotone. 
    \end{problem}
    The solution of the problem as stated above is certainly out of sight at present. However, some special cases have been dealt with in the literature. See \cite{nailwal2023_2}, \cite{CN2023},  \cite{nailwalnyj2024}. 
    In this article, we provide a solution in a few more special cases. In particular, we extend the work carried out in \cite{nailwal2023_2} and it also has a bearing on the work appearing in \cite{nailwalnyj2024}. In what follows, we take up the Problem \ref{p1} in the context of polynomials of the type $$p(x,y)=b(x)+a(x)y$$  where $a(x)$ and $b(x)$ are polynomials satisfying $\deg a < \deg b$.
    In particular we give a solution to the problem if $\deg b=\deg a+1$ and also provide an example of a  family of polynomials $p(x,y)$ with $\deg b=\deg a+2$ such that the net $\Big\{\frac{1}{p(m,n)}\Big\}_{m,n\in \mathbb{Z}_+}$ is not joint completely monotone. \\ 
    The Problem \ref{p1} can also be rephrased in terms of the Cauchy dual subnormality problem (CDSP, for short) in the theory of operators on Hilbert spaces. The said problem has been extensively studied in the literature. The interested readers may refer to \cite{acjs2019}, \cite{sc2020}, \cite{athavale1996}, \cite{badea2019}, \cite{cgr2022}, \cite{CJJS2021}, \cite{mkvms2025}, \cite{shimorin2001}. The Problem \ref{p1} can be reformulated in terms of CDSP as given below. 
    \begin{problem}\label{p2}
      Characterize torally expansive toral joint $m-$ isometric multi-shifts whose Cauchy dual is jointly subnormal. 
    \end{problem}
    R. Nailwal has given a complete solution to this problem for $2$-shifts in the case $m=3$. See \cite{nailwalnyj2024}. \\
    In section 2, we explain the notation and terminology used in the sequel. In section 3, we summarize the present status of the problem under consideration and describe the main theorem proved in this article. Section 4 deals with the proof of the main theorem and in the section 5, we shall make some observations regarding the jointly subnormal operator pairs arising out of these considerations.    
    
	\section{Notations and Terminology}
				\noindent  Let $\mathbb{N}$, $\mathbb{Z}_+$ and $\mathbb{R}_+$ respectively denote the set of natural numbers, the set of non negative integers and the set of non negative real numbers. Let $n\in \mathbb{N}$ and  $\alpha=(\alpha_1, \dots,\alpha_n), \beta=(\beta_1,\dots,\beta_n)\in \mathbb{Z}_+^n$. The symbol $|\alpha|$ denotes the sum $\alpha_1+ \dots+\alpha_n.$ We set $(\beta)_\alpha=\prod_{i=1}^n(\beta_i)_{\alpha_i}$, where $(\beta_i)_0=1, (\beta_i)_1=\beta_i$ and \[(\beta_i)_{\alpha_i}=\beta_i(\beta_i-1)\dots(\beta_i-\alpha_i+1), \alpha_i\geq 2, i=1,2,\dots,n.\]
	We write $\alpha\leq \beta$ if $\alpha_i\leq \beta_i$ for every $i=1,\dots,n$. For $\alpha\leq \beta$,  ${\beta \choose \alpha}$ denotes the product $\prod_{i=1}^{n}{\beta_i \choose \alpha_i}$.\\
	\noindent For a net $\{x_\alpha\}_{\alpha\in \mathbb{Z}_+^n}$ and $i=1,\dots,n$, let $\Delta_i$ denote the the {\it forward difference operator} defined as \[\Delta_ix_\alpha=x_{\alpha+\varepsilon_i}-x_\alpha,~~ \alpha\in \mathbb{Z}_+^n,\]
	where $\varepsilon_i$ denotes the $n$-tuple with $i$-th entry equal to $1$ and $0$ elsewhere.
	Note that for any $i,j\in \{1,\dots,n\}$, $\Delta_i\Delta_j=\Delta_j\Delta_i$. For $\alpha=(\alpha_1, \dots,\alpha_n)\in\mathbb{Z}_+^n$, let $\Delta_\alpha$ denote the operator $\prod_{i=1}^{n}\Delta_j^{\alpha_j}$.\\
    
	\noindent The definitions relevant to the present discussion have been recorded below: \\
	\begin{enumerate}
		\item [(i)] A net $a=\{a_\alpha\}_{\alpha\in \mathbb{Z}_+^n}$ is said to be {\it joint completely monotone} if 
	\[(-1)^{|\beta|}\Delta^\beta a_\alpha\geq 0,~~\alpha,\beta \in  \mathbb{Z}_+^n. \]		
	When $n=1$ we refer to $a$ as a {\it completely monotone sequence}. Further, we say that $a$ is {\it separate completely monotone} net if for every $j=1,\dots,n$, $k\in \mathbb{Z}_+$,
	\[(-1)^k\Delta_j^k a_\alpha\geq 0, ~~\alpha \in \mathbb{Z}_+^n.\] 	
	\noindent We note an observation that every joint completely monotone net is separate completely monotone.
\item [(ii)] An infinitely differentiable function $f:\mathbb{R}^n_+\to (0,\infty)$ is said to be {\it joint completely monotone} function if \[(-1)^{|\beta|}(\partial ^\beta f)(x)\geq 0,~~~ \beta\in \mathbb{Z}^n_+, x\in \mathbb{R}^n_+\]
\item [(iii)] A polynomial $p$ in two variables $x_1$ and $x_2$ is said to be of {\it bi-degree}\\ $\alpha=(\alpha_1,\alpha_2)\in \mathbb{Z}_+^2$ if for each $j=1,2$, $\alpha_j$ is the largest integer for which $\partial_j^{\alpha_j}p\neq 0$, where $\partial_j^{\alpha_j}$ denote the partial derivative of order $\alpha_j$ of $p$ with respect to $x_j$.\\
	\end{enumerate}

	\section{The Main Theorem}
	\noindent The complexity of  Problem \ref{p1} (or equivalently, the Problem \ref{p2}) depends on the bi-degree of the polynomial $p(x,y).$ Therefore, it is instructive to examine the nature of the solution in some specific cases.  We therefore focus on the following special case of Problem \ref{p1} by specializing to the polynomial in two variables with bi-degree $(k,1)$:	
    	\begin{problem}\label{p3}
		Let $k,l\in \mathbb{N}$. 
		For $a_i,b_j\in (0,\infty)$, $i=0,1,\dots,l$ and \\ $j=0,1,2,\dots,k$, $(l\leq k)$ let $$a(x)=a_0\ds\prod_{j=1}^{l}(x+a_j),~  ~b(x)=b_0\ds\prod_{j=1}^{k}(x+b_j),~~~x\in \mathbb{R}_+$$
		Characterize those polynomials $p:\mathbb{R}^2_+\to (0,\infty)$ of the form $p(x,y)=b(x)+a(x)y$ for which the net $\left\{\frac{1}{p(m,n)}\right\}_{m,n\in \mathbb{Z}_+}$ is joint completely monotone. 
	\end{problem}  
    
This problem has been studied in \cite{nailwal2023_2,nailwalnyj2024}. The Table \ref{t1} depicts the work carried in these papers and thus provides a picture at a glance about various cases studied in the said papers.
\begin{table}[hbt!] \caption{\bf Various cases studied in literature} 
	\centering
	\begin{tabular}{|c| c| c| c|}  \hline 
		{\bf Bidegree}&{\bf deg(a)}&{\bf deg(b)}&{\bf Studied in}\\ [0.5ex] 
		\hline\hline
		$(k,1)$&$k$&$k$&\cite[Theorem 1.4]{nailwal2023_2}\\
		\hline
		$(1,1)$&$1$&$1$&\cite[Theorem 3.1]{nailwal2023_2}\\
		\hline
		$(2,1)$&$1$&$2$&\cite[Theorem 2.1]{nailwalnyj2024}\\ \hline
		
	\end{tabular}\\
	
	\label{t1}
\end{table}

In the present work, the results from \cite{nailwal2023_2} are extended for certain cases. Indeed, \cite{nailwal2023_2} deals with the polynomials $p(x,y)=b(x)+a(x)y$ where $a(x)$ and $b(x)$ are polynomials of {\it same} degree. It is observed there that one of the necessary conditions for joint complete monotonicity of the net $\{\frac{1}{p(m,n)}\}_{m,n\in \mathbb{Z}_+}$ is $\deg a \leq \deg b$. Here we take up the case where $\deg a < \deg b.$   Further, in the cases where a sufficient condition is satisfied by the net $\{\frac{1}{p(m,n)}\}_{m,n\in \mathbb{Z}_+}$, we apply \cite{shields1974} and \cite{jewell1979} and study some properties of the corresponding subnormal weighted shifts. \\

For the purpose of ready reference, we now state two results from \cite{nailwal2023_2} which are important in the present context. 
	\begin{theorem}\cite[Theorem 1.4]{nailwal2023_2}\label{theorem2}
			Let $k\in \mathbb{N}$. For distinct $a_j,b_j\in (0,\infty)$, $j=0,1,\dots,k$, let $a(x)=a_0\ds\prod_{j=1}^{k}(x+a_j),~  ~b(x)=b_0\ds\prod_{j=1}^{k}(x+b_j),~~~x\in \mathbb{R}_+$.
			Let $p:\mathbb{R}^2_+\to (0,\infty)$ defined as $p(x,y)=b(x)+a(x)y$. \\
			The following statements are valid:
			\begin{enumerate}
				\item[(i)] if $b_1\leq a_1\leq b_2 \leq a_2 \leq \dots \leq b_k\leq a_{k}$ then $\Big\{\ds\frac{1}{p(m,n)}\Big\}_{m,n\in \mathbb{Z}_+}$ is joint completely monotone net.
				\item[(ii)] if $\Big\{\ds\frac{1}{p(m,n)}\Big\}_{m,n\in \mathbb{Z}_+}$ is joint completely monotone net then 
				\[\displaystyle\sum_{j=1}^{k}\displaystyle\frac{1}{a_j}\leq \displaystyle\sum_{j=1}^{k}\displaystyle\frac{1}{b_j},\hspace*{1cm}\displaystyle\prod_{j=1}^{k}b_j\leq \displaystyle\prod_{j=1}^{k}a_j,\hspace*{1cm}\displaystyle\sum_{j=1}^{k}b_j\leq \displaystyle\sum_{j=1}^{k}a_j\]
			\end{enumerate} 
	\end{theorem}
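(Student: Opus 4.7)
The plan is to apply the multidimensional Hausdorff moment theorem: for part~(i), exhibit a positive Borel measure on $[0,1]^2$ whose two-variable moments are $\{1/p(m,n)\}$; for part~(ii), extract the three scalar inequalities from the existence of such a measure by testing it against suitable non-negative functionals.

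For part~(i), I would start with the one-dimensional Hausdorff identity $\frac{1}{n+\alpha} = \int_0^1 t^{n+\alpha-1}\,dt$ applied with $\alpha = b(m)/a(m) > 0$, giving
\[\frac{1}{p(m,n)} = \int_0^1 t^n\,\frac{t^{b(m)/a(m)-1}}{a(m)}\,dt,\]
so it suffices to show that $g_t(m) := t^{b(m)/a(m)-1}/a(m)$ is completely monotone in $m$ for each $t\in (0,1]$. The partial-fraction decomposition
\[\frac{b(x)}{a(x)} = \frac{b_0}{a_0} + \sum_{j=1}^{k}\frac{\gamma_j}{x+a_j},\qquad \gamma_j = \frac{b_0\prod_{i}(b_i-a_j)}{a_0\prod_{i\neq j}(a_i-a_j)},\]
combined with a sign count under the interlacing $b_1\leq a_1\leq \cdots\leq b_k\leq a_k$ (the numerator of $\gamma_j$ has sign $(-1)^j$, the denominator sign $(-1)^{j-1}$) shows $\gamma_j\leq 0$ for every $j$; this is the only place where the hypothesis enters. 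Setting $\lambda_j := (-\gamma_j)(-\ln t)\geq 0$, the sequence factorises as
\[g_t(m) = \frac{t^{b_0/a_0-1}}{a_0}\prod_{j=1}^{k}\frac{e^{\lambda_j/(m+a_j)}}{m+a_j},\]
and each factor $e^{\lambda/u}/u = \sum_{n\geq 0}\lambda^n/(n!\,u^{n+1})$ is completely monotone as a non-negative combination of the completely monotone functions $u\mapsto 1/u^{n+1}$. Since pointwise products of completely monotone sequences remain completely monotone, $g_t$ has a representing measure $\nu_t$ on $[0,1]$, and $d\mu(s,t) := d\nu_t(s)\,dt$ is the desired joint Hausdorff representation on $[0,1]^2$.

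For part~(ii), given the representing measure $\mu$ supplied by joint complete monotonicity, I would obtain each of the three inequalities from a specific positivity condition. The three conditions are scalar constraints on the rational function $R(x) := b(x)/a(x)$: the inequality $\prod b_j\leq \prod a_j$ is equivalent to $R(0)\leq \lim_{x\to\infty}R(x)=b_0/a_0$; the inequality $\sum b_j\leq \sum a_j$ is the non-positivity of the $1/x$ subleading term of $R(x)-b_0/a_0$ at infinity; and $\sum 1/a_j\leq \sum 1/b_j$ is the condition $R'(0)\geq 0$. To extract each I would choose finite differences $\Delta_1^{\alpha}\Delta_2^{\beta}c_{m,n}$ whose non-negativity under joint complete monotonicity, after expanding $c_{m,n}=1/p(m,n)$ in the elementary symmetric functions of the roots and passing to leading asymptotics in $m$ or $n$, reduces to the desired arithmetic inequality; equivalently, one integrates an explicit non-negative polynomial in $(1-s,1-t)$ against $\mu$. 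The main obstacle I anticipate is this second part: the three inequalities are strictly weaker than the interlacing hypothesis of~(i), so one cannot invert the sufficient-condition argument and must instead choose the right test functionals very carefully, typically combining the limit $n\to\infty$ (which isolates $\{1/a(m)\}_m$) with the slice $n=0$ (giving $\{1/b(m)\}_m$) and matching $1/m$-corrections. Part~(i), by contrast, is essentially mechanical once the partial-fraction sign analysis and the complete monotonicity of $e^{\lambda/u}/u$ are in place.
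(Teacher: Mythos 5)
Your part (i) is correct and follows essentially the same route this paper uses for the analogous case (Theorem \ref{theorem1}(i)); note the statement itself is quoted here from \cite[Theorem 1.4]{nailwal2023_2} without proof. The reduction $\frac{1}{p(m,n)}=\int_0^1 t^{n}\,\frac{t^{b(m)/a(m)-1}}{a(m)}\,dt$, the partial-fraction decomposition of $b/a$ (with constant term $b_0/a_0$ in the equal-degree case, instead of the linear term appearing in the paper's case $\deg a=\deg b-1$), the sign count giving $\gamma_j\le 0$ under interlacing, the expansion of $e^{\lambda/u}/u$ into nonnegative multiples of the completely monotone functions $1/u^{n+1}$, and closure of complete monotonicity under products are exactly the ingredients of Lemmas \ref{lemma1}, \ref{lemma2}, \ref{l5} and the proof of Theorem \ref{theorem1}(i).

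Part (ii), however, contains a genuine gap. You correctly reformulate the three conclusions as statements about $R=b/a$ (namely $R'(0)\ge 0$, $R(0)\le b_0/a_0$, and nonpositivity of the $1/x$-coefficient of $R$ at infinity), but you never actually deduce any of them from the hypothesis that the \emph{net} $\{1/p(m,n)\}$ is joint completely monotone: the proposed ``finite differences'' and ``test functionals'' are left unspecified, and you acknowledge not knowing which ones work. That passage from a condition on $\mathbb{Z}_+^2$ to pointwise and asymptotic statements about the rational function $R$ on $\mathbb{R}_+$ is precisely the nontrivial content of this half of the theorem. The paper's toolkit shows how it is organized: Lemma \ref{lemma3} gives $a'(x)b(x)\le a(x)b'(x)$ on $\mathbb{R}_+$, i.e.\ $R$ is nondecreasing, and from this single inequality all three conclusions follow --- $R'(0)\ge 0$ yields $\sum_j 1/a_j\le\sum_j 1/b_j$ (this is exactly how Theorem \ref{theorem1}(ii) is proved, by setting $x=0$ in $a'/a\le b'/b$), while $R(x)\le\lim_{x\to\infty}R(x)=b_0/a_0$ yields $\prod_j b_j\le\prod_j a_j$ at $x=0$ and, via $R(x)=(b_0/a_0)\bigl(1+(\sum_j b_j-\sum_j a_j)/x+O(1/x^2)\bigr)$, also $\sum_j b_j\le\sum_j a_j$. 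So the missing idea is to establish the monotonicity of $b/a$ (equivalently the derivative inequality of Lemma \ref{lemma3}) from the moment hypothesis; note moreover that Lemma \ref{lemma3} is stated for the \emph{function} $1/p$ being joint completely monotone, so a bridge from the net hypothesis to the function-level statement is needed as well, and your sketch supplies neither step.
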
  
	\begin{theorem}\cite[Theorem 2.1]{nailwalnyj2024}\label{theorem3}
		Let $p:\mathbb{R}^2_+\to (0,\infty)$ be a polynomial given by $p(x,y)=b(x)+a(x)y$ where $a(x)=a_0(x+a_1)$ and $b(x)=b_0(x+b_1)(x+b_2)$, $a_0,a_1,b_0,b_1,b_2\in \mathbb{R}$, with $b_1\leq b_2$ and $a_0a_1\neq 0$. then the net $\left\{\frac{1}{p(m,n)}\right\}_{m,n\in \mathbb{Z}_+}$ is joint completely monotone if and only if $b_1\leq a_1\leq b_2$.
	\end{theorem}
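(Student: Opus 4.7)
My plan is to prove the two implications separately: the sufficiency by a limit argument from Theorem 2.1 (i) with $k=2$, and the necessity by disintegration of the representing measure combined with small-$s$ asymptotics.

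For the sufficiency direction, I would fix $N\ge b_2$ and set $a_N(x)=(a_0/N)(x+a_1)(x+N)$, $p_N(x,y)=b(x)+a_N(x)\,y$. Then $a_N$ is of degree two with roots $-a_1,-N$, the chain $b_1\le a_1\le b_2\le N$ is in force, and $a_N(x)\to a(x)$ as $N\to\infty$. Theorem 2.1 (i) with $k=2$ then produces a positive Borel measure $\mu_N$ on $[0,1]^2$ with $1/p_N(m,n)=\int s^m t^n\,d\mu_N$. Since $\mu_N([0,1]^2)=1/p_N(0,0)$ is bounded uniformly in $N$, Helly's theorem supplies a weakly convergent subsequence $\mu_{N_k}\Rightarrow\mu$, and passing to the limit in the moment identities exhibits $\mu$ as the representing measure of $\{1/p(m,n)\}$, giving joint complete monotonicity.

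For the necessity, I start from the representing measure $\mu$ on $[0,1]^2$ and notice that its $s$-marginal is forced by the moments $\{1/b(m)\}_m$ to be $d\mu_s(s)=(s^{b_1-1}-s^{b_2-1})/[b_0(b_2-b_1)]\,ds$. Disintegrating $\mu$ as $d\mu_s(s)\,d\mu_{t\mid s}(t)$, with each $\mu_{t\mid s}$ a probability measure, the moments $\Phi_n(s):=\int t^n\,d\mu_{t\mid s}$ satisfy $0\le \Phi_n(s)\le 1$; by uniqueness of the moment problem, $\Phi_n(s)\,d\mu_s(s)$ equals the representing measure of $\{1/p(m,n)\}_m$. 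Factoring $p(m,n)=b_0(m+r_1(n))(m+r_2(n))$ (valid when the discriminant $D(n)$ is non-negative) and applying partial fractions yields
\[
\Phi_n(s)=\frac{(b_2-b_1)\bigl(s^{r_1(n)-1}-s^{r_2(n)-1}\bigr)}{(r_2(n)-r_1(n))\bigl(s^{b_1-1}-s^{b_2-1}\bigr)}.
\]
As $s\to 0^+$ one has $\Phi_n(s)\sim C\cdot s^{r_1(n)-b_1}$, and combined with $\Phi_n(s)\le 1$ this forces $r_1(n)\ge b_1$ for every $n\ge 0$. Squaring the defining inequality $b_1+b_2+\alpha n-\sqrt{D(n)}\ge 2b_1$ (with $\alpha=a_0/b_0$) produces the identity $(b_2-b_1+\alpha n)^2-D(n)=4\alpha n(a_1-b_1)$, so the requirement $r_1(n)\ge b_1$ is equivalent to $a_1\ge b_1$. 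The companion inequality $a_1\le b_2$ I would derive either by performing the symmetric asymptotic analysis on the dual disintegration $d\mu_t(t)\,d\mu_{s\mid t}(s)$, or by exploiting the fact that $a_1>b_2$ forces $D(n)<0$ for a range of integer $n$, obstructing the very existence of the real-rooted partial-fraction form.

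The main obstacle I anticipate is the upper half $a_1\le b_2$ of the necessity. While the lower bound $a_1\ge b_1$ emerges cleanly from the small-$s$ asymptotic of $\Phi_n(s)$ and a one-line algebraic identity, the upper bound requires handling the regime where $p(\cdot,n)$ has complex roots in $m$ for certain integer $n$; moreover, the naive dual analysis via $\Psi_m(t)=(a_1/(m+a_1))t^{q(m)-q(0)}$ and the constraint $\Psi_m(t)\le 1$ only yields the weaker bound $a_1\ge b_1b_2/(b_1+b_2)$, so a parallel argument is not quite as transparent and an additional ad hoc step will likely be needed to close the gap.
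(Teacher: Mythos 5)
First, note that the paper itself offers no proof of this statement: it is quoted verbatim from \cite[Theorem 1.2]{nailwal2023}, and only its sufficiency half is reproved here implicitly, as the special case $k=2$, $l=1$ of Theorem \ref{theorem1}(i) (via the partial-fraction decomposition of $b/a$ and Lemma \ref{l5}). Your sufficiency argument is a different and essentially sound route: degree-raising $a_N(x)=(a_0/N)(x+a_1)(x+N)$, Theorem \ref{theorem2}(i), uniform mass bound $\mu_N([0,1]^2)=1/b(0)$, and weak-$*$ compactness on the compact square, with the moment identities passing to the limit because the monomials $s^mt^n$ are continuous; the only loose end is that Theorem \ref{theorem2}(i) is stated for distinct parameters, so boundary cases such as $a_1=b_1$ or $a_1=b_2$ need an extra perturbation absorbed into the same limiting procedure. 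Your lower-bound necessity ($a_1\ge b_1$) via the $s$-marginal, disintegration, uniqueness of Hausdorff representations, and the small-$s$ comparison at a large integer $n$ (where $D(n)>0$ automatically) is also essentially correct.

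The genuine gap is the upper bound $a_1\le b_2$, and neither of your proposed repairs closes it. The set of real $n$ with $D(n)<0$ is the interval centered at $(2a_1-b_1-b_2)/\alpha$ with half-length $2\sqrt{(a_1-b_1)(a_1-b_2)}/\alpha$, $\alpha=a_0/b_0$; when $a_1-b_2$ is small this interval can miss every integer, so "complex roots at some integer $n$" is not forced. Concretely, take $a_0=b_0$, $b_1=1$, $b_2=1.2$, $a_1=1.21$: the interval is roughly $(0.13,0.31)$, so $p(\cdot,n)$ has real roots $r_1(n)\le r_2(n)$ for every $n\in\mathbb{Z}_+$; moreover $p(-b_2,n)=b_0\,n(a_1-b_2)>0$ and the vertex lies left of $-b_2$, so $r_1(n),r_2(n)>b_2$ for all $n\ge 1$, and writing $\Phi_n(s)$ as the ratio of the averages of $s^{w-1}$ over $[r_1(n),r_2(n)]$ and over $[b_1,b_2]$ shows $\Phi_n(s)\le 1$ for all $s\in(0,1)$. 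Thus in exactly the regime you must rule out, both of your stated obstructions (the pointwise bound $\Phi_n\le 1$ and the complex-root obstruction) are silent, even though the theorem asserts the net is not joint completely monotone; and your route (a) only reproduces $a_1\ge b_1b_2/(b_1+b_2)$, which is the same non-sharp information as Lemma \ref{lemma3} evaluated at $x=0$. Closing the gap requires a genuinely stronger constraint than $0\le\Phi_n(s)\le 1$ — for instance that $n\mapsto\Phi_n(s)$ must itself be a Hausdorff moment sequence (it is the moment sequence of the conditional measure $\mu_{t\mid s}$) for almost every $s$, or the argument of the source paper — none of which appears in the proposal.
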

	
	In the present article, we obtain sufficient condition and a necessary condition for the net in Problem \ref{p3} when $l<k$. The main theorem proved in the article is stated below.
	\begin{theorem}{\normalfont(Main theorem).}\label{theorem1}
		Let $l,k\in \mathbb{N}$ be such that $l<k$. For distinct $a_i,b_j\in (0,\infty)$, $i=0,1,\dots,l$ and $j=0,1,2,\dots,k$, let 
		\begin{equation}\label{eq4}
			a(x)=a_0\ds\prod_{j=1}^{l}(x+a_j),~  ~b(x)=b_0\ds\prod_{j=1}^{k}(x+b_j),~~~x\in \mathbb{R}_+
		\end{equation}
		Let $p:\mathbb{R}^2_+\to (0,\infty)$ be a polynomial in two variables with bi-degree $(k,1)$ defined as $p(x,y)=b(x)+a(x)y$.
		\begin{enumerate}
			\item [(i)] For $l=k-1$;\\ If  $b_1\leq a_1\leq b_2 \leq a_2 \leq \dots \leq a_{l}\leq b_k$ then $\Big\{\ds\frac{1}{p(m,n)}\Big\}_{m,n\in \mathbb{Z}_+}$ is joint completely monotone net.
			\item [(ii)]If $\Big\{\ds\frac{1}{p(m,n)}\Big\}_{m,n\in \mathbb{Z}_+}$ is joint completely monotone net then $$\ds\sum_{j=1}^{l}\ds\frac{1}{a_j}\leq \ds\sum_{j=1}^{k}\ds\frac{1}{b_j}$$\\
		\end{enumerate}
	\end{theorem}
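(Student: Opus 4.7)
I will prove (i) by an approximation that reduces to Theorem~\ref{theorem2} and (ii) by upgrading the discrete Hausdorff representation to a continuous one and extracting the inequality from a derivative-limit argument.

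For (i) the plan is to realise $\{1/p(m,n)\}$ as a weak limit of joint completely monotone nets already covered by Theorem~\ref{theorem2}. Introduce the auxiliary polynomial
\[
a_M(x)=\frac{a_0}{M}(x+M)\prod_{j=1}^{k-1}(x+a_j),
\]
a polynomial of degree $k$ with positive roots $a_1,\dots,a_{k-1},M$. For every $M>b_k$ with $M\notin\{a_j\}\cup\{b_j\}$ the extended interlacing $b_1\leq a_1\leq b_2\leq\cdots\leq a_{k-1}\leq b_k\leq M$ holds with all roots distinct, so Theorem~\ref{theorem2}(i) applies to $p_M(x,y)=b(x)+a_M(x)y$ and produces a positive regular Borel measure $\mu_M$ on $[0,1]^2$ with $\int s^m t^n\,d\mu_M=1/p_M(m,n)$ for all $(m,n)\in\mathbb{Z}_+^2$. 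As $M\to\infty$ we have $a_M(x)=a(x)(1+x/M)\to a(x)$ pointwise, so $p_M(m,n)\to p(m,n)$ and $1/p_M(m,n)\to 1/p(m,n)$. The total masses $\mu_M([0,1]^2)=1/p_M(0,0)$ remain uniformly bounded, so Helly's selection theorem yields a subsequential weak limit $\mu_{M_i}\rightharpoonup\mu$; the moments converge, giving $\int s^m t^n\,d\mu=1/p(m,n)$, which is the Hausdorff criterion for joint complete monotonicity of $\{1/p(m,n)\}$.

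For (ii), by the multi-dimensional Hausdorff moment theorem the hypothesis produces a positive $\mu$ on $[0,1]^2$ with $1/p(m,n)=\int s^m t^n\,d\mu$. Set $F(x,y):=\int s^x t^y\,d\mu$ for $(x,y)\in\mathbb{R}_+^2$. Differentiation under the integral sign yields, for every $\beta\in\mathbb{Z}_+^2$,
\[
(-1)^{|\beta|}\partial^\beta F(x,y)=\int s^x t^y(-\ln s)^{\beta_1}(-\ln t)^{\beta_2}\,d\mu(s,t)\geq 0,
\]
i.e.\ the continuous analogue of joint complete monotonicity. A Carlson-type uniqueness argument, applied one variable at a time, identifies $F$ with $1/p$: for each fixed $n\in\mathbb{Z}_+$ both $F(\cdot,n)$ and $x\mapsto 1/(b(x)+a(x)n)$ are analytic and bounded on $\{\operatorname{Re} x>0\}$ and coincide on $\mathbb{Z}_+$, so they agree on $\mathbb{R}_+$; a symmetric argument in $y$ then gives $F\equiv 1/p$ on $\mathbb{R}_+^2$. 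Hence $1/p$ itself satisfies the continuous joint complete monotonicity inequalities. Applying them with $\beta=(1,j)$ at the origin, and using the identity $\partial_y^j(1/p)=(-1)^j j!\,a(x)^j/(b(x)+a(x)y)^{j+1}$, a direct calculation gives
\[
(-1)^{1+j}\partial_x\partial_y^j\Bigl(\tfrac{1}{p}\Bigr)\bigg|_{(0,0)}=\frac{j!\,a(0)^{j-1}}{b(0)^{j+2}}\bigl[(j+1)a(0)b'(0)-j\,a'(0)b(0)\bigr]\geq 0,
\]
so $(j+1)a(0)b'(0)\geq j\,a'(0)b(0)$ for every $j\geq 1$. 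Dividing by $j\,a(0)b(0)$ and letting $j\to\infty$ yields $b'(0)/b(0)\geq a'(0)/a(0)$, and since $a'(0)/a(0)=\sum_{j=1}^l 1/a_j$ and $b'(0)/b(0)=\sum_{j=1}^k 1/b_j$, this is precisely the claimed inequality.

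The crux of the argument is the continuous-parameter identification $F(x,y)=1/p(x,y)$ on $\mathbb{R}_+^2$ used in~(ii); everything else is a short computation or a direct limit. The Carlson-type uniqueness requires checking that, for real $y\geq 0$, the polynomial $b(x)+a(x)y$ still has only real negative roots in $x$, so that $1/p$ is analytic and bounded on $\{\operatorname{Re} x>0\}$; this is visible for integer $y=n$ from sign changes of $b(x)+a(x)n$ at the points $-b_j$ (an IVT argument identical to the one behind Theorem~\ref{theorem2}(i)) and extends to real $y\geq 0$ by continuity of the roots under perturbation of the coefficients.
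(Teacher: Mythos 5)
Your part (i) is correct, and it takes a genuinely different route from the paper. The paper proves (i) from scratch: it decomposes $b/a$ by partial fractions (Lemma \ref{lemma1}), shows $\{t^{b(m)/a(m)}\}_m$ is a Hausdorff moment sequence via the series expansion and the integral identity of Lemma \ref{lemma2}, and then concludes with Lemma \ref{l5}. You instead append an auxiliary root $M>b_k$ to $a$, so that the equal-degree interlacing hypothesis of Theorem \ref{theorem2}(i) holds for $p_M=b+a_My$, and pass to the limit $M\to\infty$; since $p_M(m,n)\to p(m,n)$ pointwise and the representing measures have uniformly bounded mass ($\mu_M([0,1]^2)=1/b(0)$), a weak-$*$ limit gives the Hausdorff representation for $1/p$. (Even simpler: joint complete monotonicity is defined by finitely many linear inequalities in the net, so it is preserved under pointwise limits, and Helly is not really needed.) This is shorter than the paper's argument but leans on the cited equal-degree theorem rather than producing the measure explicitly.

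Part (ii) has a genuine gap. Your Carlson-type identification of $F(x,n)=\int s^xt^n\,d\mu$ with $1/p(x,n)$ requires $1/p(\cdot,n)$ to be analytic and bounded on the closed right half-plane, i.e.\ $b(x)+na(x)$ must be zero-free for $\operatorname{Re}x\ge 0$. You justify this by a sign-change/IVT argument at the points $-b_j$, but that argument uses the interlacing of the roots of $a$ and $b$, which is the hypothesis of part (i) only; part (ii) assumes nothing beyond positivity of the roots and $l<k$. Zero-freeness can genuinely fail in this generality: take $b(x)$ with its three roots clustered near $-1$ and $a(x)=x+100$; for large $n$ the polynomial $b(x)+na(x)$ has exactly one real root and a conjugate pair with real part approximately $97/2>0$. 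For such data the net is presumably not joint completely monotone, but that is exactly what you would need to exploit: the zero-freeness (equivalently, the function-level complete monotonicity of $1/p$) has to be deduced from the JCM hypothesis itself, and your argument does not do this, so the step "$F\equiv 1/p$ on $\mathbb{R}_+^2$" is unsupported. The paper sidesteps this by quoting \cite[Lemma 2.9]{nailwal2023_2} (Lemma \ref{lemma3} here), which yields $a'(x)b(x)\le a(x)b'(x)$ on $\mathbb{R}_+$, and then evaluates at $x=0$; your subsequent computation of $\partial_x\partial_y^j(1/p)$ at the origin and the limit $j\to\infty$ are correct and recover exactly that inequality at $x=0$, so the only missing piece — but an essential one — is the passage from the JCM net to the JCM function.
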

	The proof of the above theorem will be given in the next section. 
	
	The Theorem \ref{theorem1} prompts us to consider the case when $l<k-1.$ In the next section, for  $l=1$ and $k=3$, we provide a family of nets of the form $\displaystyle\Big\{\frac{1}{p(m,n)}\Big\}_{m,n\in \mathbb{Z}_+}$ which are not joint completely monotone, where, \[p(x,y)=b(x)+a(x)y,\\
	b(x)=b_0(x+b_1)(x+b_2)(x+b_3) \text{ and } a(x)=a_0(x+a_1),\] $a_i,bj\in (0,\infty)$, $i=0,1$, $j=0,1,2,3$. \\
	
	\noindent Finally, we fix $n\in \mathbb{N}$ and take up the study of the  weighted sequence space (refer \cite{shields1974}) associated with the sequence $\{\beta_m\}_{m\in \mathbb{Z}_+}$ defined as $\beta_m=\ds\frac{1}{p(m,n)}$ where $p:\mathbb{R}^2_+\to (0, \infty)$ is a polynomial given by $p(x,y)=b(x)+a(x)y$, with\\
	$a(x)=a_0\ds\prod_{j=1}^{k-1}(x+a_j),~  ~b(x)=b_0\ds\prod_{j=1}^{k}(x+b_j)$ and \\$0<b_1\leq a_1\leq b_2 \leq a_2 \leq \dots \leq a_{k-1}\leq b_k$.	\\

Here, we prove the following properties for the corresponding unilateral weighted shift $T$ defined on the reproducing kernel Hilbert space which we denote by  $ \mathcal{H}^2(\beta_m)$.  
	\begin{enumerate}
		\item The operator $T$ is a subnormal contraction.
		\item The operator $T$ is essentially normal.
		\item The spectrum of $T$ is $\overline{\mathbb{D}}$.
	\end{enumerate}  
	
	


	
	\section{Two cases with bi-degree $(k,1)$ }\label{sec2}
	
	In this section, we present the proof of Theorem \ref{theorem1}.  We begin with a technical lemma. 
	\begin{lemma}\label{lemma1}
		Let $k\in \mathbb{N}$, $k>1$. For $a_i,b_j\in (0,\infty)$, $i=0,1,\dots, k-1$ and $j=0,1,2,\dots,k$, let $$a(x)=a_0\ds\prod_{i=1}^{k-1}(x+a_i),~  ~b(x)=b_0\ds\prod_{i=1}^{k}(x+b_i),~~~x\in \mathbb{R}_+$$
		Then
		\[\ds\frac{b(x)}{a(x)}=c_0\left(x+c+\ds\sum_{i=1}^{k-1}\ds\frac{A_i}{x+a_i}\right)\]
		where, $c_0=\ds\frac{b_0}{a_0}$, $c=\ds\sum_{i=1}^{k}b_i - \ds\sum_{i=1}^{k-1}a_i$ and\\ $A_i=\ds\frac{\ds\prod_{j=1}^{k}(b_j-a_i)}{\ds\prod_{\substack{j=1\\j\neq i}}^{k-1}(a_j-a_i)}$, for each $i=1,2,\dots,k-1$.
		\end{lemma}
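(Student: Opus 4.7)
The plan is to view the identity as the result of polynomial division followed by a partial fraction decomposition. Since $\deg b = k$ and $\deg a = k-1$, there exist polynomials $q$ of degree $1$ and $r$ with $\deg r < k-1$ such that
\[
\frac{b(x)}{a(x)} = q(x) + \frac{r(x)}{a(x)}.
\]
The aim is to identify $q(x) = c_0(x+c)$ and then to expand $r(x)/a(x)$ as a sum of simple fractions $A_i/(x+a_i)$ with the claimed constants.

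First I would pin down the quotient $q(x)$ by comparing leading coefficients. Write $q(x) = \alpha x + \beta$. Expanding
\[
a(x) q(x) = (\alpha x + \beta)\bigl(a_0 x^{k-1} + a_0 \textstyle\sum_{i=1}^{k-1} a_i \, x^{k-2} + \dots\bigr),
\]
the coefficient of $x^k$ must match that of $b(x)$, giving $\alpha a_0 = b_0$, i.e.\ $\alpha = c_0$. The coefficient of $x^{k-1}$ on the right is $a_0\bigl(\alpha \sum_{i=1}^{k-1} a_i + \beta\bigr)$, and must equal $b_0 \sum_{j=1}^{k} b_j$; solving yields $\beta = c_0 \bigl(\sum_{j=1}^{k} b_j - \sum_{i=1}^{k-1} a_i\bigr) = c_0\, c$. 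So $q(x) = c_0(x+c)$ as claimed.

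Next, since the roots $-a_1,\dots,-a_{k-1}$ of $a(x)$ are distinct (they are positive and distinct by hypothesis), the rational function $r(x)/a(x)$ has a partial fraction expansion $\sum_{i=1}^{k-1} B_i/(x+a_i)$. To compute $B_i$, I multiply both sides of
\[
\frac{b(x)}{a(x)} - c_0(x+c) = \sum_{i=1}^{k-1} \frac{B_i}{x+a_i}
\]
by $(x+a_i)$ and let $x \to -a_i$. The left-hand side has limit
\[
\lim_{x \to -a_i} \frac{b(x)(x+a_i)}{a(x)} = \frac{b(-a_i)}{a_0 \prod_{j \neq i}(a_j - a_i)} = \frac{b_0 \prod_{j=1}^{k}(b_j - a_i)}{a_0 \prod_{j \neq i}(a_j - a_i)} = c_0 A_i,
\]
since the linear polynomial contribution $c_0(x+c)(x+a_i)$ vanishes at $x=-a_i$. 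Hence $B_i = c_0 A_i$, and combining with the quotient gives the asserted formula. There is no real obstacle here; the only point requiring care is keeping track of signs and of indices in the residue computation, which is handled by the identities $(-a_i + b_j) = b_j - a_i$ and $(-a_i + a_j) = a_j - a_i$.
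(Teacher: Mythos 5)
Your proposal is correct and follows essentially the same route as the paper: determine the linear part $c_0(x+c)$ by matching the two top coefficients, then obtain each $A_i$ by evaluating at $x=-a_i$ (the paper clears the denominator and substitutes $x=-a_i$, which is the same residue computation you perform as a limit). The only point worth noting is that the distinctness of the $a_i$, which you invoke for the simple-pole partial fraction expansion, is indeed the intended hypothesis since the formula for $A_i$ already requires $\prod_{j\neq i}(a_j-a_i)\neq 0$.
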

	\begin{proof}
	To find $c$ and $A_i$ such that 
	\begin{equation}\label{eq1}
		b(x)=c_0\left((x+c)a(x)+\ds\sum_{i=1}^{k-1}\frac{A_i a(x)}{x+a_i}\right).
	\end{equation}
	The coefficient of $x^{k-1}$ in the left side of equation (\ref{eq1}) is $\ds\sum_{i=1}^{k}b_i$. Further, the terms $\ds\sum_{i=1}^{k-1}\frac{A_i a(x)}{x+a_i}$  from right side of (\ref{eq1}) have degree less than $k-1$ and hence, the coefficient of $x^{k-1}$ in the right side of (\ref{eq1}) is $c+\ds\sum_{i=1}^{k-1}a_i$. Hence, we must have, $c=\ds\sum_{i=1}^{k}b_i - \ds\sum_{i=1}^{k-1}a_i$.\\
	Now, for each $i=1,2,\dots, k-1$, substituting $x=-a_i$ in equation (\ref{eq1}), we get,\\
	\[b(-a_i)=A_i \ds\prod_{\substack{j=1\\j\neq i}}^{k-1}(a_j-a_i)\]
	Hence, for each $i=1,2,\dots, k-1$ we get, 
	\[A_i=\ds\frac{\ds\prod_{j=1}^{k}(b_j-a_i)}{\ds\prod_{\substack{j=1\\j\neq i}}^{k-1}(a_j-a_i)}\]
	This completes the proof.
	\end{proof}
	\noindent The following identity borrowed from \cite{nailwal2023_2} has been used in the proof of Theorem \ref{theorem1}.
	\begin{lemma}\cite[Eqn.(2.1)]{nailwal2023_2}\label{lemma2}
		For any real number $x>0$,
		\[\ds\frac{(-1)^{k-1}}{(k-1)!}\ds\int_{0}^{1}(\log s)^{k-1}s^{x-1+n}ds=\ds\frac{1}{(n+x)^k},~~k\geq 1,~ n\geq 0.\]
	\end{lemma}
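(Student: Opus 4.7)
The plan is to reduce the integral to a standard Gamma function evaluation via the change of variables $t = -\log s$. Under this substitution $s = e^{-t}$ and $ds = -e^{-t}\,dt$, the interval $s \in (0,1]$ maps to $t \in [0,\infty)$, and the integrand transforms as $(\log s)^{k-1} = (-1)^{k-1} t^{k-1}$ and $s^{x-1+n} = e^{-(x-1+n)t}$. After reversing the limits to absorb the sign coming from $ds$, the factor $e^{-t}$ combines with $e^{-(x-1+n)t}$ to give $e^{-(x+n)t}$, yielding
\[
\int_0^1 (\log s)^{k-1} s^{x-1+n}\,ds \;=\; (-1)^{k-1} \int_0^\infty t^{k-1} e^{-(x+n)t}\,dt.
\]

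The remaining integral is a scaled version of $\Gamma(k)$: since $x > 0$ and $n \geq 0$ imply $x + n > 0$, the substitution $u = (x+n)t$ gives $\int_0^\infty t^{k-1} e^{-(x+n)t}\,dt = \Gamma(k)/(x+n)^k = (k-1)!/(x+n)^k$. Multiplying both sides of the resulting equation by $(-1)^{k-1}/(k-1)!$ produces exactly the stated identity.

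There is essentially no obstacle here beyond justifying absolute convergence: near $s=0$ the factor $|\log s|^{k-1}$ grows only logarithmically while $s^{x-1+n}$ with $x+n>0$ is integrable on $(0,1]$, so Fubini/change-of-variables applies without issue. As an alternative route, one can proceed by induction on $k$, starting from the elementary identity $\int_0^1 s^{x-1+n}\,ds = 1/(x+n)$ and differentiating $k-1$ times under the integral sign with respect to $x$ (justified by uniform convergence on compact subsets of $x>0$); each differentiation pulls out a factor of $\log s$ on the left and multiplies the right-hand side by $-k/(x+n)^{k+1}$, recovering the claim after $k-1$ steps.
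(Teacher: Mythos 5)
Your argument is correct. The paper itself offers no proof of this identity --- it is simply quoted from \cite[Eqn.\ (2.1)]{nailwal2023_2} --- so there is nothing internal to compare against; your substitution $t=-\log s$ reducing the integral to $\Gamma(k)/(x+n)^k$ is the standard derivation, the sign bookkeeping $(\log s)^{k-1}=(-1)^{k-1}t^{k-1}$ works out exactly, and the convergence remark (integrability of $s^{x+n-1}|\log s|^{k-1}$ near $0$ for $x+n>0$) covers the only point needing justification. The alternative route by differentiating $\int_0^1 s^{x+n-1}\,ds = 1/(x+n)$ under the integral sign $k-1$ times is equally valid, though note that the $j$-th differentiation multiplies the right-hand side by $-j/(x+n)$ rather than the factor you wrote; the accumulated product is still $(-1)^{k-1}(k-1)!/(x+n)^k$, so the conclusion is unaffected.
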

	\noindent The following lemma allows us to transform the  two variable Hausdorff moment problem to the one variable case. The statement of the lemma is modified version of \cite[Lemma 2.3]{nailwal2023_2}. Here we have $\deg (a(x))=k-1<k=\deg(b(x))$ whereas the lemma in the stated paper have $\deg (a(x))=k=\deg(b(x))$.
	\begin{lemma}\label{l5}
		Let $a,b$ be two polynomials defined in the equation (\ref{eq4}) such that $l=k-1$ and let $\{c(m)\}_{m\in \mathbb{Z}_+}$ be a sequence of positive real numbers such that the sequence $\{\frac{c(m)}{a(m)}\}_{m\in \mathbb{Z}_+}$ is completely monotone sequence. Then the net $\{\frac{c(m)}{b(m)+a(m)n}\}_{m\in \mathbb{Z}_+}$ is joint completely monotone if $\{t^{\frac{b(m)}{a(m)}}\}_{m\in \mathbb{Z}_+}$ is a Hausdorff moment sequence for every $t\in (0,1)$.
	\end{lemma}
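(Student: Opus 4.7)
The plan is to separate the $n$-dependence of the net from its $m$-dependence by means of a one-variable integral representation, and then to use the two Hausdorff moment hypotheses on the $m$-variable to build up a two-dimensional Hausdorff moment measure. First, I would rewrite
\[
\ds\frac{c(m)}{b(m)+a(m)n} \,=\, \ds\frac{c(m)/a(m)}{b(m)/a(m)+n}
\]
and apply Lemma \ref{lemma2} with $k=1$ to the second factor to obtain
\[
\ds\frac{c(m)}{b(m)+a(m)n} \,=\, \ds\int_{0}^{1}\left(\ds\frac{c(m)}{a(m)}\,t^{b(m)/a(m)}\right)t^{n}\,\ds\frac{dt}{t}.
\]

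For each fixed $t\in(0,1)$, the quantity in parentheses is the Hadamard product of two Hausdorff moment sequences in $m$: the sequence $\{c(m)/a(m)\}_{m}$ is Hausdorff by hypothesis, and $\{t^{b(m)/a(m)}\}_{m}$ is Hausdorff by the standing assumption of the lemma. Pushing the product of their representing measures on $[0,1]^2$ forward under $(s,s')\mapsto ss'$ yields a finite positive Borel measure $\sigma_t$ on $[0,1]$ with
\[
\ds\frac{c(m)}{a(m)}\,t^{b(m)/a(m)} \,=\, \ds\int_{[0,1]} s^{m}\,d\sigma_t(s).
\]
Substituting this into the representation above and invoking Tonelli's theorem (the integrand is nonnegative) would then give
\[
\ds\frac{c(m)}{b(m)+a(m)n} \,=\, \ds\int_{[0,1]^2} s^{m}\,t^{n}\,d\mu(s,t),\qquad d\mu(s,t):=\ds\frac{1}{t}\,d\sigma_t(s)\,dt.
\]
Evaluating at $m=n=0$ pins down the total mass of $\mu$ as $c(0)/b(0)<\infty$, so $\mu$ is a finite, positive Borel measure on $[0,1]^2$, and the multi-dimensional Hausdorff moment theorem recalled in Section~1 yields the claimed joint complete monotonicity.

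The main technical hurdle will be establishing the measurability of $t\mapsto\sigma_t$ required both to make sense of $d\mu$ and to justify Tonelli. I would handle this via uniqueness in the one-dimensional Hausdorff moment problem: for each fixed $m$ the moment $\int s^m\,d\sigma_t(s) = (c(m)/a(m))\,t^{b(m)/a(m)}$ depends continuously on $t$, and on the compact interval $[0,1]$ moment convergence is equivalent to weak-$*$ convergence (Stone--Weierstrass plus boundedness of total mass), so $t\mapsto\sigma_t$ is weak-$*$ continuous, hence Borel measurable. The apparent singularity $1/t$ at $t=0$ is absorbed by the factor $\sigma_t([0,1])=(c(0)/a(0))\,t^{b(0)/a(0)}$: the relevant total-mass integrand becomes $(c(0)/a(0))\,t^{b(0)/a(0)-1}$, which is integrable on $(0,1]$ because $b(0)/a(0)>0$.
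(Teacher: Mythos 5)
Your proposal is correct and follows essentially the same route as the paper: the same integral representation $\frac{c(m)}{b(m)+a(m)n}=\int_0^1 \frac{c(m)}{a(m)}\,t^{b(m)/a(m)-1}\,t^n\,dt$, followed by the observation that for fixed $t$ the $m$-sequence is a product of the completely monotone sequence $\{c(m)/a(m)\}$ with the assumed Hausdorff moment sequence $\{t^{b(m)/a(m)}\}$. The only difference is that you make explicit the assembly of the representing measure on $[0,1]^2$ (measurability of $t\mapsto\sigma_t$, Tonelli, finiteness), which the paper leaves implicit in its reduction to the fixed-$t$ sequences being Hausdorff moment sequences.
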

	\begin{proof}
		Assume that $\{t^{\frac{b(m)}{a(m)}}\}_{m\in \mathbb{Z}_+}$ is a Hausdorff moment sequence for every $t\in (0,1)$. Let $A=a/c$ and $B=b/c$. Note that
		\begin{align}
			\displaystyle\frac{c(m)}{b(m)+a(m)n}&=\displaystyle\frac{1}{B(m)+A(m)n}\nonumber\\
			&=\int_{[0,1]}t^n \displaystyle\frac{t^{\frac{B(m)}{A(m)}-1}}{A(m)} dt, ~~~m,n\in \mathbb{Z}_+.
		\end{align}
		As $\frac{B}{A}=\frac{b}{a}$, it is enough to check that for every $t\in (0,1)$, $\left\{ \displaystyle\frac{t^{\frac{B(m)}{A(m)}-1}}{A(m)}\right\}_{m\in \mathbb{Z}_+}$ is a Hausdorff moment sequence. \\
		Given that the sequence $\{\frac{c(m)}{a(m)}\}_{m\in \mathbb{Z}_+}$ is completely monotone sequence, we get the sequence $\{\frac{1}{A(m)}\}_{m\in \mathbb{Z}_+}$ is completely monotone. Further, by assumption we have $\{t^{\frac{b(m)}{a(m)}}\}_{m\in \mathbb{Z}_+}$ is a Hausdorff moment sequence for every $t\in (0,1)$. Since the product of two completely monotone sequences is completely monotone (see \cite[Lemma 8.2.1(v)]{bcr1984}), the claim is settled.
	\end{proof}
	Finally, we state a lemma borrowed from  \cite[Lemma 2.9]{nailwal2023_2}. 
	\begin{lemma}\label{lemma3}
		For polynomials $a,b:\mathbb{R}_+\to (0,\infty)$ let $p(x,y)=b(x)+a(x)y$, $x,y\in \mathbb{R}_+$. If $\frac{1}{p}$ is a joint completely monotone function then \[a'(x)b(x)\leq a(x)b'(x),~~ x\in \mathbb{R}_+~;~~ \text{ and }~~ \deg a \leq \deg b.\]
	\end{lemma}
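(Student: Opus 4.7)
The plan is to extract the inequality $a'b \leq ab'$ from the JCM hypothesis by probing $1/p$ with the mixed partial derivatives $\partial_x\partial_y^k$ for arbitrarily large $k$, and then deduce the degree inequality by a monotonicity argument on the ratio $a/b$.

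First, since $\partial_y p = a(x)$ is independent of $y$, a short induction yields the clean formula
\[\partial_y^k \frac{1}{p(x,y)} \;=\; \frac{(-1)^k k!\, a(x)^k}{p(x,y)^{k+1}}.\]
Differentiating once more in $x$ (with $p_x(x,y) = b'(x) + a'(x)y$) gives
\[\partial_x\partial_y^k \frac{1}{p} \;=\; \frac{(-1)^k k!\, a(x)^{k-1}}{p(x,y)^{k+2}}\Bigl[k\, a'(x)\, p(x,y) \,-\, (k+1)\, a(x)\, p_x(x,y)\Bigr].\]
The JCM hypothesis $(-1)^{k+1}\partial_x\partial_y^k(1/p)\geq 0$, combined with the positivity of $a$ and $p$ on $\mathbb{R}^2_+$, collapses to the pointwise inequality
\[(k+1)\, a(x)\,\bigl(b'(x)+a'(x)y\bigr) \;\geq\; k\, a'(x)\,\bigl(b(x)+a(x)y\bigr), \qquad x,y\geq 0,\ k\in \mathbb{Z}_+.\]

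Second, I would specialize at $y=0$ to obtain $a(x)b'(x) \geq \tfrac{k}{k+1}\, a'(x)b(x)$ for every $k\in \mathbb{Z}_+$; letting $k\to \infty$ then yields the first conclusion $a'(x)b(x) \leq a(x)b'(x)$ for all $x\in \mathbb{R}_+$.

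Finally, this inequality is exactly $(a/b)'(x)\leq 0$ on $\mathbb{R}_+$, so $a/b$ is non-increasing and in particular bounded above by $a(0)/b(0) < \infty$. Were $\deg a > \deg b$, we would have $a(x)/b(x)\to \infty$ as $x\to\infty$, contradicting this bound; hence $\deg a \leq \deg b$.

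Main obstacle: The one non-obvious point is that no single value of $k$ is enough. Any fixed $k$ produces only the weaker inequality $a'b \leq \tfrac{k+1}{k}ab'$, so one must exploit the \emph{whole} family of JCM conditions and pass $k\to\infty$ to sharpen the constant to $1$. Everything else is routine differentiation and a comparison-of-growth argument.
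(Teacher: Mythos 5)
Your proof is correct and follows the same route the paper sketches (an inductive computation of the mixed partials $\partial_x\partial_y^k(1/p)$ followed by the sign condition for joint complete monotonicity); the paper merely asserts this as a ``routine calculation,'' whereas you supply the details, including the necessary passage $k\to\infty$ to sharpen the constant to $1$ and the monotonicity of $a/b$ for the degree claim. No gaps.
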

	\noindent A routine calculation of partial derivatives of $f$ using induction and then the condition for joint complete monotonicity provides the proof of the above lemma.\\
 As stated earlier, we now present the proof of the main Theorem \ref{theorem1}. The argument in the proof is similar to that in proof II of \cite[Theorem 2.1]{nailwal2023_2}.

\begin{proof}[\unskip\nopunct]
	{\bf Proof of Theorem \ref{theorem1} :}\\
	(i) We use Lemma \ref{lemma1} to write \\
	\begin{equation}\label{eq5}
		\ds\frac{b(x)}{a(x)}=c_0\left(x+c+\ds\sum_{i=1}^{k-1}\ds\frac{A_i}{x+a_i}\right)
	\end{equation}
	where, $c_0=\ds\frac{b_0}{a_0}$, $c=\ds\sum_{i=1}^{k}b_i - \ds\sum_{i=1}^{k-1}a_i$ and\\ $A_i=\frac{\ds\prod_{j=1}^{k}(b_j-a_i)}{\ds\prod_{\substack{j=1\\j\neq i}}^{k-1}(a_j-a_i)}$, for each $i=1,2,\dots,k-1$.\\
	From the given relations $b_1\leq a_1\leq b_2 \leq a_2 \leq \dots \leq a_{k-1}\leq b_k$, it follows that $c>0$ and $A_i<0$, for each $i=1,2, \dots, k-1$.
   \\ Now using equation (\ref{eq5}), we get, 
	\[t^{\frac{b(m)}{a(m)}}=t^{c_0m}t^{c_0c}t^{c_0\sum_{i=1}^{k-1}\frac{A_i}{m+a_i}}=t^{c_0m}t^{c_0c}\prod_{i=1}^{k-1}t^{\frac{c_0A_i}{m+a_i}}.\]
	Note that for any $i=1,2,\dots,k-1$, $m\in \mathbb{Z}_+$ and $t>0$, we have 
		\[t^{\frac{A_i}{m+a_i}}=\ds\sum_{j=0}^{\infty}\ds\frac{(A_i \log t)^j}{j!(m+a_i)^j}.\]
	Applying Lemma \ref{lemma2} to each term in the right side we get, \[t^{\frac{A_i}{m+a_i}}=\int_{[0,1]}s^m\delta_1(ds)+\ds\sum_{j=1}^{\infty}\ds\frac{(A_i \log t)^j}{(j-1)!j!} \int_{[0,1]}(-\log s)^{j-1}s^{a_i-1+m}ds,\]
    where $\delta_1$ is a unit point mass measure at $1$.
	By Dominated Convergence theorem, we obtain the following:
	\[t^{\frac{A_i}{m+a_i}}=\int_{[0,1]}s^m\mu_{i,t}(ds)\]
	where, the measure $\mu_{i,t}$ is given by,
	\[\mu_{i,t}(ds)=\delta_1(ds)+w_i(s,t)ds,\]
	with the weight function $w_i$ is given by
	\[w_i(s,t)=s^{a_i-1}\ds\sum_{j=1}^{\infty}\ds\frac{(A_i \log t)^j(-\log s)^{j-1}}{(j-1)!j!},~~s,t\in (0,1).\]
	As $A_i<0$, for each $i=1,2, \dots, k-1$, we get that for each $t\in (0,1)$, $w_i$ is nonnegative function of $s$ on $(0,1)$. 
	Hence for $t\in (0,1)$, $\{t^{\frac{A_i}{m+a_i}}\}$ is a Hausdorff moment sequence, for every $i=1,2,\dots,k-1$. \\
	Also, it can be seen from the definition that, for each $t\in (0,1)$, the sequence $\{t^{c_0m}\}_{m\in \mathbb{Z}_+}$ is completely monotone. Further, the product of two completely monotone sequences is completely monotone (see \cite[Lemma 8.2.1(v)]{bcr1984}). Thus, we have $\{t^{\frac{b(m)}{a(m)}}\}_{m\in \mathbb{Z}_+}$ is a Hausdorff moment sequence for every $t\in (0,1)$. Finally, by applying Lemma \ref{l5}, we conclude that $\{\frac{1}{b(m)+a(m)n}\}_{m,n\in \mathbb{Z}_+}$ is joint completely monotone net.\\
	
	\noindent (ii) It is given that $\Big\{\ds\frac{1}{p(m,n)}\Big\}_{m,n\in \mathbb{Z}_+}$ is joint completely monotone net. Thus by Lemma \ref{lemma3}, we have 
    $$a'(x)b(x)\leq a(x)b'(x)~,~\forall x\in \mathbb{R}_+.$$ 
    This gives\\
	$a_0b_0\ds\prod_{j=1}^{k}(x+b_j) \left(\sum_{i=1}^{l}\prod_{\substack{j=1\\j\neq i}}^{l}(x+a_j)\right)\leq b_0a_0 \prod_{i=1}^{l}(x+a_i)\left(\sum_{i=1}^{k}\prod_{\substack{j=1\\j\neq i}}^{k}(x+b_j)\right)$\\
	$\ds\frac{\ds\sum_{i=1}^{l}\prod_{\substack{j=1\\j\neq i}}^{l}(x+a_j)}{\ds\prod_{i=1}^{l}(x+a_i)}\leq \ds\frac{\ds\sum_{i=1}^{k}\prod_{\substack{j=1\\j\neq i}}^{k}(x+b_j)}{\ds\prod_{j=1}^{k}(x+b_j)}$  \\
	We put $x=0$ to get,  $\ds\sum_{j=1}^{l}\ds\frac{1}{a_j}\leq \ds\sum_{j=1}^{k}\ds\frac{1}{b_j}$.
\end{proof}
\begin{remark}
    The conditions 
    \[\displaystyle\prod_{j=1}^{k}b_j\leq \displaystyle\prod_{j=1}^{k}a_j,\hspace{1cm}\displaystyle\sum_{j=1}^{k}b_j\leq \displaystyle\sum_{j=1}^{k}a_j\]
    appearing in Theorem \ref{theorem2} \cite[Theorem 1.4]{nailwal2023_2} are necessary in the case $\deg(b)=\deg(a)$. However, as illustrated in the example below, these conditions are not necessary in case of  $\deg(b)=\deg(a)+1$. This fact highlights the difference between the cases $l=k$ and $l<k$.\\
    Let $b_0=1, b_1=1, b_2=3$ and $a_0=1, a_1=2$. We have, $b_1<a_1<b_2$. Thus, by Theorem \ref{theorem3}, the net $\Big\{\displaystyle\frac{1}{b(m)+a(m)n}\Big\}_{m,n\in\mathbb{Z}_+}$ is joint completely monotone whereas $b_1, b_2$ and $a_1$ does not satisfy either of the above two conditions.
\end{remark}
We now turn our attention to the case $l<k-1$. We choose $l=1$ and $k=3$ and try to imitate the proof of Theorem \ref{theorem1}.\\
In this case, we have  \[b(x)=b_0(x+b_1)(x+b_2)(x+b_3) \text{ and } a(x)=a_0(x+a_1)\] where $a_i,b_j\in (0,\infty)$, $i=0,1$, $j=0,1,2,3$. Let \begin{equation}\label{eq6}
	p(x,y)=b(x)+a(x)y.
\end{equation}
It turns out that \\
\[\ds\frac{b(x)}{a(x)}=\ds\frac{b_0}{a_0}\left((x+b_1)(x+b_2+b_3-a_1)+(b_3-a_1)(b_2-a_1)+\ds\frac{\ds\prod_{j=1}^{3}(b_j-a_1)}{x+a_1}\right).\]

At this stage, in the proof of Theorem \ref{theorem1} we used the fact that $\{t^{km}\}_{m\in \mathbb{Z}_+}$ is a Hausdorff moment sequence. However, a routine calculation suggests that the sequence $\{t^{km^2}\}_{m\in \mathbb{Z}_+}$ is not a Hausdorff moment sequence. Thus the argument in the proof of Theorem \ref{theorem1} breaks down in the case where $l<k-1$. The indication prompts us to pose the following question:
\begin{question}
    Given the net $\displaystyle\Big\{\frac{1}{p(m,n)}\Big\}_{m,n\in \mathbb{Z}_+}$ (where $p$ defined in equation (\ref{eq6})) satisfying $0<b_1<a_1<b_2<b_3$, is the net $\displaystyle\Big\{\frac{1}{p(m,n)}\Big\}_{m,n\in \mathbb{Z}_+}$ joint completely monotone?
\end{question}

As a step forward in dealing with this problem, we provide a family of nets of the form $\displaystyle\Big\{\frac{1}{p(m,n)}\Big\}_{m,n\in \mathbb{Z}_+}$ (where $p$ defined in equation (\ref{eq6})) which are not joint completely monotone but the condition $0<b_1<a_1<b_2<b_3$ is not satisfied.
\begin{example}\label{ex1}
	Let $p:\mathbb{R}^2_+\to (0,\infty)$ be defined as in equation (\ref{eq6}). Let $t_1=b_1b_2b_3$ and $t_2=(1+b_1)(1+b_2)(1+b_3)$. If \[\ds\frac{(t_2+3)^2}{\left(1/\sqrt{2}\right)^2}-\ds\frac{(t_1+3/2)^2}{\left(1/2\right)^2}<1\]
	then the net $\{1/p(m,n)\}_{m,n\in \mathbb{Z}_+}$ is not joint completely monotone.
\end{example}
\begin{proof}
	\noindent Let $\beta(m,n)=\ds\frac{1}{p(m,n)}$ for $m,n\in \mathbb{Z}_+$. Then,
we have, \begin{align*}
	\Delta_1\Delta_2\beta(m,n)=&\beta(m+1,n+1)-\beta(m,n+1)-\beta(m+1,n)+\beta(m,n)\\
	=&\ds\frac{1}{(m+1+b_1)(m+1+b_2)(m+1+b_3)+(m+1+a_1)(n+1)}\\&-\ds\frac{1}{(m+b_1)(m+b_2)(m+b_3)+(m+a_1)(n+1)}\\
	&-\ds\frac{1}{(m+1+b_1)(m+1+b_2)(m+1+b_3)+(m+1+a_1)n}\\&-\ds\frac{1}{(m+b_1)(m+b_2)(m+b_3)+(m+a_1)n}.
\end{align*}
Take $t_2=(m+1+b_1)(m+1+b_2)(m+1+b_3), t_1=(m+b_1)(m+b_2)(m+b_3)$, then we get, \\
\begin{align*}
	\Delta_1\Delta_2\beta(m,n)=&\ds\frac{1}{t_2+(m+1+a_1)(n+1)}-\ds\frac{1}{t_2+(m+1+a_1)n}\\&-\ds\frac{1}{t_1+(m+a_1)(n+1)}+\ds\frac{1}{t_1+(m+a_1)n}\\
	=&-\ds\frac{m+1+a_1}{D_2}+\ds\frac{m+a_1}{D_1}
\end{align*}
Where,\\
 $D_2=(t_2+(m+1+a_1)(n+1))(t_2+(m+1+a_1)n)$ and \\ $D_1=(t_1+(m+a_1)(n+1))(t_1+(m+a_1)n)$.\\
Consider the case, $m=0, a_1=1, n=1$:\\
We get, $$\Delta_1\Delta_2\beta(0,1)=\ds\frac{D_2-2D_1}{D_1D_2}.$$
In this case, we note the following: $$t_1=b_1b_2b_3,  t_2=(1+b_1)(1+b_2)(1+b_3)\text{ and } D_2=(t_2+4)(t_2+2) ,D_1=(t_1+1)(t_1+2).$$
Hence, \begin{align*}
	\Delta_1\Delta_2\beta(0,1)<0 &\text{ if } D_2-2D_1<0\\
	&\text{ if } t_2^2-2t_1^2+6t_2-6t_1+4<0\\
	&\text{ if } \ds\frac{(t_2+3)^2}{\left(1/\sqrt{2}\right)^2}-\ds\frac{(t_1+3/2)^2}{\left(1/2\right)^2}<1. \numberthis \label{eq8}
\end{align*}
\end{proof}
\noindent Note that we can choose the values of $b_1, b_2, b_3$ and $a_1$ such that the condition (\ref{eq8}) is satisfied. We obtain two of such families below.
\begin{enumerate}
	\item For any $b\in \mathbb{R}^+$, we set  $b_1=b,b_2=2b,b_3=3b$ and $a_1=1$, the condition (\ref{eq8}) becomes
\[-36b^6+132b^5+193b^4+144b^3+124b^2+48b+11<0.\]
This is true only if $b<-0.849$ or $b>4.94$. \\
As $b>0$, we can conclude that for each value of $b>4.94$, the corresponding net is not joint completely monotone. 
\item For any value of $b\in \mathbb{R}^+$, we set $b_1=b_2=b_3=b$ and $a_1=1$. The condition (\ref{eq8}) becomes
\[-b^6+6b^5+15b^4+20b^3+33b^2+24b+11<0.\]
This is true only if $b<-0.849$ or $b>8.19$. \\
As $b>0$, we can conclude that for each value of $b>8.19$, the corresponding net is not joint completely monotone.\\
\end{enumerate}

\section{Hilbert space associated with a Hausdorff moment sequence(net) and multiplication by $z$ operator }\label{sec3}
It is known that corresponding to every sequence (or net) of positive real numbers, one can associate a reproducing kernel Hilbert space (see  \cite{shields1974}, \cite{jewell1979}). Further, it is a well known fact that if the sequence (or net) is a Hausdorff moment sequence (or net), then the operator multiplication by $z$ ($M_z$) on the corresponding Hilbert space is subnormal. In this section, we study Hilbert spaces that arise in the context of the sequences of the form $\{1/p(m,n)\}_{m\in \mathbb{Z}_+}$, for fixed $n\in \mathbb{Z}_+$ and obtain some operator theoretic properties of the corresponding operator $M_z$. 

 We now record  some definitions and results borrowed from \cite{shields1974} which are required in sequel.

\begin{definition} (\cite[Section 3]{shields1974})
For a sequence of positive real numbers $\{\gamma(n)\}$ with $\gamma(0)=1$, $H^2(\gamma)$ is the space of all sequences $\{f=\hat{f(n)}\}$ such that \\$\|f\|_\gamma^2=\sum |\hat{f(n)}|^2|\gamma(n)|^2<\infty$.
\end{definition}
\begin{definition} (\cite[Section 1]{shields1974})
	Let $\mathcal{H}$ be a separable Hilbert space with an orthonormal basis $\{e_n\}_{n\in \mathbb{Z}_+}$. Let $\{w_n\}_{n\in \mathbb{Z}_+}$ be a sequence of positive real numbers. A unilateral weighted shift $T$ is an operator from $\mathcal{H}$ to itself defined as $Te_n=w_ne_{n+1}$, for all $n\in \mathbb{Z}_+$.
\end{definition}
 We  find it convenient to quote the following two theorems which are relevant in the present context. 
\begin{theorem}(\cite[Proposition 7]{shields1974})\label{thm1}
	\\The linear transformation $M_z$ on $H^2(\gamma)$ is unitarily equivalent to an injective unilateral weighted shift operator (with weight sequence $\{\alpha_m\}$ given below). Conversely, every injective unilateral weighted shift operator $T:\{\alpha_m\}$ is unitarily equivalent to $M_z$ acting on $H^2(\gamma)$, for a suitable choice of $\gamma$). The relation between $\{\alpha_m\}$ and $\gamma$ is given by the equations:
	$$\alpha_m=\ds\frac{\gamma(m+1)}{\gamma(m)}\hspace*{1cm} \text{and}\hspace*{1cm}
	\gamma(m)=\alpha_0\alpha_1\dots \alpha_{m-1}.$$
\end{theorem}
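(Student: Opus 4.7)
The plan is to produce an explicit orthonormal basis of $H^2(\gamma)$ that makes the action of $M_z$ visibly that of a weighted shift, and then to invert this construction for the converse. For each $n\in\mathbb{Z}_+$, I would let $\delta_n$ denote the sequence with $1$ in position $n$ and $0$ elsewhere, and set $e_n = \gamma(n)^{-1}\delta_n$. A direct computation from the definition $\|f\|_\gamma^2 = \sum |\hat{f}(n)|^2\gamma(n)^2$ gives $\|e_n\|_\gamma = 1$, and distinct $\delta_m,\delta_n$ are supported on different indices, so the $e_n$ are mutually orthogonal. Every $f\in H^2(\gamma)$ is the $\|\cdot\|_\gamma$-limit of its partial sums $\sum_{n<N}\hat{f}(n)\delta_n$, which lie in the linear span of $\{e_n\}$, so $\{e_n\}$ is an orthonormal basis.

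Next, on each basis element I would compute $M_z e_n = \gamma(n)^{-1} M_z \delta_n = \gamma(n)^{-1}\delta_{n+1} = \bigl(\gamma(n+1)/\gamma(n)\bigr)\,e_{n+1}$, so $M_z$ acts on $\{e_n\}$ precisely as an injective unilateral weighted shift with weights $\alpha_m = \gamma(m+1)/\gamma(m)$; injectivity is automatic because each $\gamma(m)>0$. This settles the forward direction.

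For the converse, given an injective unilateral weighted shift $T$ on a Hilbert space $\mathcal{H}$ with orthonormal basis $\{f_n\}$ and weight sequence $\{\alpha_m\}$, $\alpha_m>0$, I would define $\gamma(0)=1$ and $\gamma(m)=\alpha_0\alpha_1\cdots\alpha_{m-1}$ for $m\geq 1$, then let $U:\mathcal{H}\to H^2(\gamma)$ be the linear extension of $f_n\mapsto e_n$. Since $U$ carries one orthonormal basis onto another it is unitary, and a single check on basis vectors yields $U T f_n = U(\alpha_n f_{n+1}) = \alpha_n e_{n+1} = \bigl(\gamma(n+1)/\gamma(n)\bigr)\,e_{n+1} = M_z e_n = M_z U f_n$, hence $U T = M_z U$ on all of $\mathcal{H}$.

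I do not expect any deep obstacle here: the argument is essentially bookkeeping once the correct basis is isolated. The only point worth explicit mention is that $M_z$ should define a bounded operator on $H^2(\gamma)$, which via $\alpha_m = \gamma(m+1)/\gamma(m)$ is equivalent to $\sup_m \alpha_m < \infty$; this is precisely the standard boundedness requirement for a unilateral weighted shift, and so imposes nothing new beyond what is already implicit in calling $M_z$ and $T$ bounded Hilbert space operators in the first place.
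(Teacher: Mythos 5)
Your proposal is correct, and it is essentially the canonical argument: the paper itself gives no proof of this statement (it is quoted verbatim from Shields, Proposition 7), and your construction --- the orthonormal basis $e_n=\gamma(n)^{-1}\delta_n$ of $H^2(\gamma)$, the computation $M_ze_n=\bigl(\gamma(n+1)/\gamma(n)\bigr)e_{n+1}$, and the unitary $f_n\mapsto e_n$ for the converse --- is exactly the standard proof found in that source. The only caveat worth noting is that a general weighted shift may have complex weights, so the converse as you state it uses the (standard, and consistent with the paper's Definition~4.2) reduction to positive weights; your closing remark that boundedness of $M_z$ corresponds to $\sup_m\alpha_m<\infty$ is also correct.
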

\begin{theorem}(\cite[Proposition 25]{shields1974})\label{thm2}
	Let $T:\{\alpha_m\}$ be an injective unilateral weighted shift represented as $M_z$ on $H^2(\gamma)$. Then $T$ is subnormal contraction if and only if $\{\beta_m\}$ is a moment sequence, where $\beta_m=\gamma_m^2=\alpha_0^2\alpha_1^2\dots \alpha_{m-1}^2$.
\end{theorem}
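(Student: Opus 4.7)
The plan is to prove this characterization of subnormal weighted shifts by exploiting the fundamental identity $\beta_m = \|T^m e_0\|^2$, which follows immediately from $T^m e_0 = \alpha_0 \alpha_1 \cdots \alpha_{m-1} e_m = \gamma(m) e_m$ in the orthonormal basis $\{e_m\}_{m \in \mathbb{Z}_+}$. I would treat both directions of the equivalence separately.

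For the forward implication, I would assume $T$ is a subnormal contraction and let $N$ on $K \supseteq H^2(\gamma)$ be its minimal normal extension. Using the spectral inclusion $\sigma(N) \subseteq \sigma(T)$ for the minimal normal extension, together with the fact that $\|N\| = r(N)$ for the normal operator $N$ and $r(T) \leq \|T\| \leq 1$, one concludes that the spectral measure $E$ of $N$ is supported in $\overline{\D}$. Writing $N = \int_{\overline{\D}} z\, dE(z)$, I would compute
\[\beta_m = \|T^m e_0\|^2 = \|N^m e_0\|^2 = \int_{\overline{\D}} |z|^{2m}\, d\langle E(z) e_0, e_0\rangle.\]
The substitution $t = |z|^2$ then yields $\beta_m = \int_0^1 t^m \, d\mu(t)$ for a positive finite Borel measure $\mu$ on $[0,1]$, identifying $\{\beta_m\}$ as a Hausdorff moment sequence.

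For the reverse implication, I would construct an explicit normal extension. Given $\beta_m = \int_0^1 t^m \, d\mu(t)$, define a rotation-invariant measure $\widetilde{\mu}$ on $\overline{\D}$ by $d\widetilde{\mu}(re^{i\theta}) = \frac{d\theta}{2\pi} \otimes d\nu(r)$, where $\nu$ is the pushforward of $\mu$ under $t \mapsto \sqrt{t}$. Integration in polar coordinates combined with the orthogonality of $\{e^{in\theta}\}$ yields $\int_{\overline{\D}} \overline{z}^n z^m \, d\widetilde{\mu}(z) = \delta_{m,n} \beta_m$. The assignment $e_m \mapsto z^m / \sqrt{\beta_m}$ then extends to an isometric embedding of $H^2(\gamma)$ onto the closed linear span of $\{z^m\}_{m \geq 0}$ in $L^2(\widetilde{\mu})$, intertwining $M_z$ on $H^2(\gamma)$ with multiplication by $z$ on this subspace. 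Since $|z| \leq 1$ on $\overline{\D}$, the latter operator is the restriction of a normal contraction, showing that $T$ is a subnormal contraction.

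The main obstacle is the forward direction: establishing that the minimal normal extension of a subnormal contraction is itself a contraction, so that the spectral measure is supported in $\overline{\D}$ rather than all of $\C$. This relies on the spectral-inclusion property $\sigma(N) \subseteq \sigma(T)$ for the minimal normal extension, which is a nontrivial fact from the general theory of subnormal operators. The reverse direction is computationally routine, but still requires care in verifying that the rotation-averaged measure $\widetilde{\mu}$ correctly encodes both the norms $\|z^m\|^2 = \beta_m$ and the orthogonality relations, so that the intertwining isometry is well defined.
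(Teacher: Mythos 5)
Your argument is correct, and it is essentially the standard Berger--Gellar--Wallen argument that the cited source itself uses for this result: necessity via the spectral measure of the minimal normal extension (whose spectrum lies in $\overline{\D}$ by the spectral inclusion you invoke, or even more simply because $\|N\|=\|T\|\le 1$ for the minimal normal extension), and sufficiency via the isometric embedding of $H^2(\gamma)$ onto the closed span of the monomials in $L^2$ of a rotation-invariant measure on $\overline{\D}$, intertwining $T$ with the normal contraction $M_z$. Note that the paper supplies no proof of this statement—it is quoted verbatim from \cite[Proposition 25]{shields1974}—so your write-up is in effect reconstructing the proof from that reference rather than diverging from anything in the paper.
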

 
\vspace*{0.3cm}
\noindent For a fixed $n\in \mathbb{Z}_+,$ we take up the study of sequence \begin{equation}\label{eq7}
	\{\beta_m\}_{m\in \mathbb{Z}_+}
\end{equation} defined as $\beta_m=\ds\frac{1}{p(m,n)}$ where $p:\mathbb{R}^2_+\to (0, \infty)$ is a polynomial given by $p(x,y)=b(x)+a(x)y$  with
$a(x)=a_0\ds\prod_{j=1}^{k-1}(x+a_j),~  ~b(x)=b_0\ds\prod_{j=1}^{k}(x+b_j)$ and $0<b_1\leq a_1\leq b_2 \leq a_2 \leq \dots \leq a_{k-1}\leq b_k$. \\

\noindent Observe that  by Theorem \ref{theorem1}, the net $\ds\Big\{\frac{1}{p(m,n)}\Big\}_{m,n\in \mathbb{Z}_+}$ is joint completely monotone which in turn implies that the sequence $\{\beta_m\}$ is Hausdorff moment sequence. \\

Let $T:\{\alpha_m\}$ be the unilateral weighted shift where $\alpha_m=\sqrt{\ds\frac{\beta_{m+1}}{\beta_m}}$. Theorem \ref{thm2} implies that  $T$ is a subnormal contraction.
\\Further by applying Theorem \ref{thm1}, the above unilateral weighted shift $T:\{\alpha_m\}$ becomes unitarily equivalent to $M_z$ on $H^2(\gamma)$ where $$\gamma_m=\alpha_0\alpha_1\dots\alpha_{m-1}=\sqrt{\ds\frac{\beta_m}{\beta_0}}.$$\\

A couple of observations about the operator $T$ have been recorded below:
\begin{theorem}
	The operator $T$ is essentially normal.
\end{theorem}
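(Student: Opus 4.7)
The plan is to use the standard criterion for essential normality of a unilateral weighted shift: if $T$ is the shift with weight sequence $\{\alpha_m\}_{m\in\mathbb{Z}_+}$ acting on $\ell^2$ with orthonormal basis $\{e_m\}$, then the self-commutator $[T^*,T]=T^*T-TT^*$ is the diagonal operator sending $e_0\mapsto \alpha_0^2\, e_0$ and $e_m\mapsto(\alpha_m^2-\alpha_{m-1}^2)e_m$ for $m\geq 1$. Such a diagonal operator is compact if and only if its diagonal entries tend to zero, so it suffices to show that $\alpha_m^2-\alpha_{m-1}^2\to 0$ as $m\to\infty$.

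Next, I would translate this condition back into the data of the polynomial $p$. By construction,
\[
\alpha_m^2=\frac{\beta_{m+1}}{\beta_m}=\frac{p(m,n)}{p(m+1,n)},
\]
so the problem reduces to asymptotic behavior of $p(m,n)/p(m+1,n)$ as $m\to\infty$, with $n$ fixed. Since $n$ is fixed, $x\mapsto p(x,n)=b(x)+a(x)n$ is a polynomial in $x$ of degree $k$ (because $\deg b=k>k-1=\deg a$) with positive leading coefficient $b_0$. Hence
\[
\lim_{m\to\infty}\frac{p(m,n)}{p(m+1,n)}=\lim_{m\to\infty}\frac{b_0 m^k+O(m^{k-1})}{b_0(m+1)^k+O(m^{k-1})}=1.
\]
Thus the sequence $\{\alpha_m^2\}$ converges to $1$, and for any convergent real sequence the differences of consecutive terms tend to zero, so $\alpha_m^2-\alpha_{m-1}^2\to 0$.

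From here the conclusion is immediate: the self-commutator $[T^*,T]$ is a diagonal operator with entries tending to $0$, hence compact, and therefore $T$ is essentially normal. I do not expect a genuine obstacle in this proof; the only mild subtlety is making sure one invokes the correct characterization of the self-commutator of a weighted shift (being careful with the $m=0$ entry, which is simply $\alpha_0^2$ and lies on the diagonal, contributing nothing to the compactness question). Everything else is asymptotic bookkeeping for a polynomial ratio, which is routine.
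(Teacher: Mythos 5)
Your argument is correct and coincides with the paper's proof: both compute the self-commutator of the weighted shift as a diagonal operator with entries $\alpha_0^2$ and $\alpha_m^2-\alpha_{m-1}^2$, and both deduce compactness from $\alpha_m^2=p(m,n)/p(m+1,n)\to 1$. Your explicit leading-coefficient justification of the limit is a minor added detail, not a different route.
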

\begin{proof}
	We know that (ref. \cite[Proposition 6.3]{conway1991}), for a unilateral weighted shift $T:\{\alpha_m\}$, \[(T^*T-TT^*)e_0=\alpha_0^2e_0 \text{ and }\] \[(T^*T-TT^*)e_m=(\alpha^2_m-\alpha^2_{m-1})e_m, \text{ for all }m\in \mathbb{N}.\]
As $\alpha_m=\sqrt{\ds\frac{\beta_{m+1}}{\beta_m}}$,  
We get, $\alpha^2_m=\ds\frac{\beta_{m+1}}{\beta_m}=\ds\frac{p(m,n)}{p(m+1,n)}\to 1$ as $m\to \infty$.\\
	Thus, $\alpha^2_m-\alpha^2_{m-1}\to 0$ as $m\to \infty$.\\
	Hence, $T$ is a subnormal operator which is essentially normal.
\end{proof}
\begin{theorem}
	The spectrum of $T$ is $\overline{\mathbb{D}}$.
\end{theorem}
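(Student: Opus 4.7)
The plan is to combine two standard ingredients: (a) the spectrum of an injective unilateral weighted shift is a closed disk centred at the origin, whose radius equals the spectral radius (see \cite[Theorem 4]{shields1974}); and (b) the fact that $T$ is a contraction, already established, together with the computation $\alpha_m \to 1$ from the proof of essential normality. These together will force $r(T) = 1$, and hence $\sigma(T) = \overline{\mathbb{D}}$.

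First I would quote the structure theorem for spectra of injective unilateral weighted shifts to reduce the problem to computing the spectral radius $r(T)$. From the preceding theorem we already have $\alpha_m^2 = p(m,n)/p(m+1,n)$ with $\alpha_m^2 \to 1$, and since $T$ is a subnormal contraction we know $\|T\| = \sup_m \alpha_m \le 1$; in particular $\alpha_m \le 1$ for every $m$, hence $r(T) \le \|T\| \le 1$. So the whole content is to show $r(T) \ge 1$.

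For the lower bound I would use the standard formula $\|T^m\| = \sup_{k\ge 0}\prod_{j=0}^{m-1}\alpha_{k+j}$, which follows from $T^m e_k = \bigl(\prod_{j=0}^{m-1}\alpha_{k+j}\bigr)e_{k+m}$. Given $\varepsilon>0$, choose $K$ so large that $\alpha_j > 1-\varepsilon$ for all $j \ge K$; then for any $m$ and $k\ge K$ we have $\prod_{j=0}^{m-1}\alpha_{k+j} > (1-\varepsilon)^m$, so $\|T^m\|^{1/m} \ge 1-\varepsilon$. Passing to the limit in the Gelfand formula $r(T) = \lim_m \|T^m\|^{1/m}$ gives $r(T) \ge 1-\varepsilon$, and letting $\varepsilon \to 0$ yields $r(T) \ge 1$. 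Combined with the upper bound this gives $r(T) = 1$, hence $\sigma(T) = \overline{\mathbb{D}}$.

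I do not expect any serious obstacle; the only point that requires a little care is verifying $\alpha_m \to 1$, but this is exactly the computation $\alpha_m^2 = p(m,n)/p(m+1,n) \to 1$ already carried out in the essential normality argument, where the ratio of two polynomials of the same degree $k$ with equal leading coefficients tends to $1$ as $m \to \infty$. Everything else is a direct appeal to \cite{shields1974} and the Gelfand spectral radius formula.
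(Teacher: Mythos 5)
Your proposal is correct and follows essentially the same route as the paper: reduce to computing $r(T)$ via \cite[Theorem 4]{shields1974}, use the formula $\|T^m\|=\sup_k\prod_{j=0}^{m-1}\alpha_{k+j}$ from \cite[Proposition 2]{shields1974}, and apply the Gelfand spectral radius formula with $\alpha_m^2=p(m,n)/p(m+1,n)\to 1$. Your split into the upper bound from contractivity and the lower bound via the $\varepsilon$-tail estimate on the weights simply makes explicit the limit computation that the paper asserts without detail, so there is no substantive difference.
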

\begin{proof}
	Let $r(T)$ denote the spectral radius of an operator $T$. We know that the spectrum of a unilateral shift $T$ is the disc $\{z\in \mathbb{C}: |z|\leq r(T)\}$ (\cite[Theorem 4]{shields1974}). 
	For any $m\in \mathbb{Z}_+$, $\|T^m\|=\displaystyle\sup_{i\in \mathbb{Z}_+} | \alpha_i \alpha_{i+1}\alpha_{i+2}\dots \alpha_{i+m-1}|$(\cite[Proposition 2]{shields1974}).\\
	Thus, we get, 
	\begin{align*}
		\|T^m\|&=\displaystyle\sup_{i\in \mathbb{Z}_+}\sqrt{\displaystyle\frac{\beta_{i+1}}{\beta_i}\frac{\beta_{i+2}}{\beta_{i+1}}\dots\frac{\beta_{i+m}}{\beta_{i+m-1}}}\\
		 &=\displaystyle\sup_{i\in \mathbb{Z}_+}\sqrt{\displaystyle\frac{\beta_{i+m}}{\beta_i}}\\
		 &=\displaystyle\sup_{i\in \mathbb{Z}_+}\sqrt{\frac{p(i,n)}{p(i+m,n)}}\\
		 &=\displaystyle\sup_{i\in \mathbb{Z}_+}\sqrt{\frac{b(i)+a(i)n}{b(i+m)+a(i+m)n}}.			
	\end{align*}
	
	Using the fact that $r(T)=\displaystyle\lim_{m\to \infty}\|T^m\|^{1/m}$ we get, $r(T)=1$ and hence, spectrum of $T$ is $\overline{\mathbb{D}}$.
	
\end{proof}
\noindent Finally, by choosing the positive real numbers $a_0,a_1,b_0,b_1,b_2$ such that $b_1\leq a_1\leq b_2$, we get the sequences defined by the equation (\ref{eq7}), that give us a family of subnormal contractions. We note that the subnormal operators so obtained are not necessarily unitarily equivalent.
\begin{example}{Subnormal operators which are not unitarily equivalent:}
\end{example}
\noindent Consider $a_0,a_1,b_0,b_1,b_2\in \mathbb{R}_+$ such that $b_1\leq a_1\leq b_2$. \\
Let $p(x,y)=b(x)+a(x)y$, $b(x)=b_0(x+b_1)(x+b_2)$ and $a(x)=a_0(x+a_1)$. By Theorem \ref{theorem1}, we know that the net $\Big\{\ds\frac{1}{p(m,n)}\Big\}_{m,n\in \mathbb{Z}_+}$ is joint completely monotone net and thus for fixed $n\in \mathbb{N}$, $\{\beta_m\}$ is a Hausdorff moment sequence where $\beta_m=\ds\frac{1}{p(m,n)}$.\\
Following the discussion after Theorem \ref{thm2}, we have a unilateral weighted shift $T:\{\alpha_m\}$ which is a subnormal contraction and is unitarily equivalent to $M_z$ on $H^2(\gamma)$ where $\gamma_m=\alpha_0\alpha_1\dots\alpha_{m-1}=\sqrt{\ds\frac{\beta_m}{\beta_0}}$.
\\
We know that (Refer \cite[Equation (14)]{shields1974}), for any $f\in H^2(\gamma)$, $$\|f\|^2_\gamma=\sum|\hat{f}(m)|^2\gamma^2_m=\sum|\hat{f}(m)|^2\left(\ds\frac{\beta_m}{\beta_0}\right).$$\\
Thus, for $f(z)=z$, $\|z\|_\gamma=\ds\frac{\beta_1}{\beta_0}=\ds\frac{b_0b_1b_2+a_0a_1n}{b_0(1+b_1)(1+b_2)+a_0(1+a_1)n}.$\\

This justifies that for different choices of $a_1,b_1,b_2$ with $b_1\leq a_1\leq b_2$, $M_z$ on corresponding $H^2(\gamma)$ are all subnormal contractions but they need not be unitarily equivalent to each other.

\section{Conclusion}
The present article deals with only a few cases of a larger problem as stated in Problem \ref{p3}. Authors believe that the solution of the general problem demands the use of techniques and methods beyond the one used in the present article. We propose to continue the study of the sequences generated by taking reciprocals of two variable polynomials and check their joint complete monotonicity. As highlighted in the present paper, such a study also allows one to talk about a variety of subnormal contractive operators.
\section*{Declaration}
The present work is carried out at the research center at the Department of Mathematics, S. P. College, Pune, India(autonomous).

\printbibliography

@article{CJJS2021,
  title={Taylor spectrum approach to Brownian-type operators with quasinormal entry},
  author={Chavan, Sameer and Jab{\l}o{\'n}ski, Zenon Jan and Jung, Il Bong and Stochel, Jan},
  journal={Annali di Matematica Pura ed Applicata (1923-)},
  volume={200},
  number={3},
  pages={881--922},
  year={2021},
  publisher={Springer}
}

@article{mkvms2025,
    author ={Khasnis, Mandar and Sholapurkar, Vinayak} ,
    title = {An example of a cyclic analytic 2-isometry whose Cauchy dual is not subnormal},
    journal = {Proc Math Sci},
    publisher = {IAS and Springer},
    year = {2025},
volume={135},
number={29},
DOI={https://doi.org/10.1007/s12044-025-00833-6}
}

@article{nailwalnyj2024,
title={A complete solution to the {C}auchy dual subnormality problem for torally expansive toral 3-isometric weighted 2-shifts},
author={Nailwal, Rajkamal},
journal={New York J. Math.},
volume={30},
  pages={1517--1533},
  year={2024},
  publisher={State University of New York, Albany}
}

@article{jewell1979,
  title={Commuting weighted shifts and analytic function theory in several variables},
  author={Jewell, Nicholas P and Lubin, Arthur R},
  journal={Journal of Operator Theory},
volume={92},
  number={1},
  pages={207--223},
  year={1979},
  publisher={JSTOR}
}

@article{nailwal2023_2,
  title={Joint complete monotonicity of rational functions in two variables and toral m-isometric pairs},
  author={Anand, Akash and Chavan, Sameer and Nailwal, Rajkamal},
  journal={J. Operator Theory},
pages={101--130},
  year={2024}
}

@article{acjs2019,
  title={A solution to the Cauchy dual subnormality problem for 2-isometries},
  author={Anand, Akash and Chavan, Sameer and Jab{\l}o{\'n}ski, Zenon Jan and Stochel, Jan},
  journal={Journal of Functional Analysis},
  volume={277},
  number={12},
  pages={108292},
  year={2019},
  publisher={Elsevier}
}

@article{athavale1996,
  title={On completely hyperexpansive operators},
  author={Athavale, Ameer},
  journal={Proceedings of the American Mathematical Society},
  volume={124},
  number={12},
  pages={3745--3752},
  year={1996}
}

@article{shimorin2001,
  title={Wold-type decompositions and wandering subspaces for operators close to isometries},
  author={Shimorin, Serguei},
  year={2001},
  publisher={Walter de Gruyter GmbH \& Co. KG Berlin, Germany}
}

@article{shields1974,
  title={Weighted shift operators and analytic function theory},
  author={Shields, Allen},
  journal={Topics in operator theory},
  pages={49--128b},
  year={1974},
  publisher={American Mathematical Society}
}

@article{CN2023,
  title={Joint complete monotonicity of rational functions in two variables and toral m-isometric pairs},
  author={Chavan, Sameer and Nailwal, Rajkamal},
  journal={Journal of Operator Theory},
  volume={},
  number={},
  pages={},
  year={2023},
  publisher={To appear}
}

@book{bcr1984,
  title={Harmonic analysis on semigroups: theory of positive definite and related functions},
  author={Berg, Christian and Christensen, Jens Peter Reus and Ressel, Paul},
  volume={100},
address={New York},
  year={1984},
  publisher={Springer}
}

@article{badea2019,
  title={The {C}auchy dual and 2-isometric liftings of concave operators},
  author={Badea, Catalin and Suciu, Laurian},
  journal={Journal of Mathematical Analysis and Applications},
  volume={472},
  number={2},
  pages={1458--1474},
  year={2019},
  publisher={Elsevier}
}

@book{conway1991,
  title={The theory of subnormal operators},
  author={Conway, John B},
  address={USA},
  volume={36},
  year={1991},
  publisher={American Mathematical Soc.}
}

@article{sc2020,
  title={The {C}auchy dual subnormality problem for cyclic 2-isometries},
  author={Anand, Akash and Chavan, Sameer and Jab{\l}o{\'n}ski, Zenon Jan and Stochel, Jan},
  journal={Advances in Operator Theory},
  volume={5},
  pages={1061--1077},
  year={2020},
  publisher={Springer}
}

@article{cgr2022,
  title={The {C}auchy dual subnormality problem via de Branges-Rovnyak spaces},
  author={Chavan, Sameer and Ghara, Soumitra and Reza, Md Ramiz},
  journal={Studia Mathematica},
  volume={265},
  number={3},
  year={2022}
}

@article{AS,
  author={Athavale, Ameer and Sholapurkar, VM},
  title={Completely hyperexpansive operator tuples},
  publisher={Springer},
  journal={Positivity},
  volume={3},
  number={3},
  pages={245--257},
  year={1999} 
}
\end{document}